\numberwithin{figure}{section}
\theoremstyle{plain}
\newtheorem{thm}{Theorem}[section]
\newtheorem{cor}{Corollary}[thm]
\theoremstyle{definition}
\newtheorem{defn}{Definition}[section]
\numberwithin{equation}{section}
\theoremstyle{remark}
\title{Some characterizations of Rectifying and osculating curves on a smooth immersed surface}
\author[A. A. Shaikh and P. R. Ghosh]{Absos Ali Shaikh$^*$ and Pinaki Ranjan Ghosh}
\address{\noindent\newline  Department of Mathematics,\newline University of
Burdwan, Golapbag,\newline Burdwan-713104,\newline West Bengal, India}
\email{aask2003@yahoo.co.in, aashaikh@math.buruniv.ac.in}
\address{\noindent\newline  Department of Mathematics,\newline University of
Burdwan, Golapbag,\newline Burdwan-713104,\newline West Bengal, India}
\email{mailtopinaki94@gmail.com}
\begin{document}

\begin{abstract}
The present paper deals with some characterizations of rectifying and osculating curves on a smooth surface with respect to the reference frame $\{\vec{T},\ \vec{N},\ \vec{T}\times\vec{N}\}$. We have computed the components of position vectors of rectifying and osculating curves along $\vec{T},\ \vec{N},\ \vec{T}\times\vec{N}$ and then investigated their invariancy under isometry of surfaces, and it is shown that they are invariant iff either the normal curvature of the curve is invariant or the position vector of the curve is in the direction of the tangent vector to the curve.
\end{abstract}
\noindent\footnotetext{ $^*$ Corresponding author.\\
$\mathbf{2010}$\hspace{5pt}Mathematics\; Subject\; Classification: 53C22, 53A04, 53A05.\\ 
{Key words and phrases: Rectifying curves, osculating curves, isometry of surfaces, normal curvature, first fundamental form, second fundamental form.} }
\maketitle
\section{Introduction}
When we discuss about a smooth curves in the Euclidean space $\mathbb{R}^3$, one of the beautiful tool is Serret-Frenet frame, consists of three mutually orthogonal unit vectors $\{\vec{t},\ \vec{n},\ \vec{b}\}$ such that $\{\vec{t},\ \vec{n}\}$, $\{\vec{n},\ \vec{b}\}$ and $\{\vec{b},\ \vec{t}\}$ are respectively spanned three mutually orthogonal planes named as osculating, normal and rectifying plane in the Euclidean space $\mathbb{R}^3$. Now curves with position vectors lie in the above defined three planes are respectively called osculating, normal and rectifying curves. But whenever we consider to study a curve on a smooth surface immersed in $\mathbb{R}^3$, at every point of the curve another three orthogonal unit vectors $\{\vec{T},\ \vec{N},\ \vec{T}\times\vec{N}\}$ comes naturally, where $\vec{T}$ is any tangent vector to the surface at that point and $\vec{N}$ being the normal to the surface.
\par
Bang-Yen Chen (\cite{BYC03}) introduced and characterized the rectifying curves in $\mathbb{R}^3$ and also studied (\cite{BYC05}) the relationship among rectifying curves, centrodes and extremal curves. For more interesting properties of osculating, normal and rectifying curves we refer the reader to see \cite{IN08}, \cite{BYC18} and \cite{ISU Novi Sad}.

\par
In $2011$ Camci et. al. (\cite{CKI11}) first studied curves on a surface immersed in $\mathbb{R}^3$ whose position vectors lie in the plane spanned by $\{\vec{T},\ \vec{N}\}$, $\{\vec{N},\ \vec{T}\times\vec{N}\}$, $\{\vec{T},\ \vec{T}\times\vec{N}\}$ and characterized such curves. The present authors (\cite{PRG18A}, \cite{PRG18B}, \cite{PRG19B}) studied rectifying, osculating and normal curves on a smooth immersed surface in $\mathbb{R}^3$ and obtained their characterizations under isometry of surfaces. The present authors (\cite{PRG19A}) also studied curves on a surface whose position vectors lie in its tangent plane.
\par
Motivating by the above studies, the objective of this paper is to study rectifying and osculating curves on a smooth immersed surface with respect to the orthogonal frame $\{\vec{T},\ \vec{N},\ \vec{T}\times\vec{N}\}$. In section 2 we discuss some basic definitions and elementary results. Section 3 and 4 are respectively deal with the characterization of rectifying and osculating curves on a smooth surface with respect to the reference frame $\{\vec{T},\ \vec{N},\ \vec{T}\times\vec{N}\}$. 
We obtain a sufficient condition for which component of rectifying curves along $\vec{T}$ and $\vec{T}\times\vec{N}$ respectively to be invariant under rectifying curve preserving isometry of surfaces (see Theorem 3.1., Theorem 3.2.). 
It is also shown that the component of the position vector of an osculating curve along $\vec{T}\times\ \vec{N}$ is invariant under osculating curve preserving isometry (see Theorem $4.2.$).
\section{preliminaries}
Let $\gamma(s)$ be an arc length parametrized curve on a surface patch $\phi(u,v)$ of a smooth surface $S$. The tangent, normal and binormal vector to the curve $\gamma(s)$ at any point $\gamma(s)$ are respectively given by 
\begin{eqnarray}
\nonumber \vec{t}(s)&=&\phi_uu'+\phi_vv',\\
\nonumber
\vec{n}(s)&=&\frac{1}{\kappa(s)}(\phi_uu''+\phi_vv''+\phi_{uu}u'^2+2\phi_{uv}u'v'+\phi_{vv}v'^2),\\
\nonumber
\vec{b}(s)&=& \frac{1}{\kappa(s)}\Big[\{v''u'-u''v'\}\vec{N}+u'^3(\phi_u\times \phi_{uu})+2u'^2v'(\phi_u\times \phi_{uv})\\
\nonumber
&&+u'v'^2(\phi_u\times \phi_{vv})+u'^2v'(\phi_v\times \phi_{uu})+2u'v'^2(\phi_v\times \phi_{uv})+v'^3(\phi_v\times \phi_{vv})\Big].
\end{eqnarray}
We note that $\gamma(s)$ is a rectifying (respectively, osculating) curve if $\gamma(s)=\lambda\vec{t}+\mu\vec{b}$ (respectively, $\alpha(s)=\lambda\vec{t}+\mu\vec{n}$), where $\lambda$, $\mu$ are smooth functions.
\begin{defn}
Two surfaces $S$ and $\bar{S}$ are isometric if there exists a diffeomorphism $f:S\rightarrow\bar{S}$ such that the length of any curve preserves under $f$. 
\end{defn}
\par
\begin{defn}
Let $\gamma(s)$ be a curve parametrized by arc length on a smooth surface $S$ in $\mathbb{R}^3$. Then $\gamma''(s)$ lies in the plane perpendicular to $\gamma'(s)$, i.e., lies in the plane spanned by the vectors $\vec{N}$ and $\gamma'(s)\times\vec{N}$. So we have two non-zero components of $\gamma''(s)$ along $\vec{N}$ and $\gamma'(s)\times\vec{N}$. Now 
\begin{eqnarray*}
\kappa_g(s)=\gamma''(s)\cdot(\gamma'(s)\times\vec{N}),\\
\kappa_n(s)=\gamma''(s)\cdot\vec{N},
\end{eqnarray*}
where $\kappa_g(s)$ and $\kappa_n(s)$ are respectively known as geodesic and normal curvature of $\gamma(s)$. Since $\gamma''(s)=\kappa(s)\vec{n}(s)$, hence $\kappa_n(s)=Lu'^2+2Mu'v'+Nv'^2$, where $L$, $M$ and $N$ are coefficients of second fundamental form of the surface. For more information of isometry, fundamental forms, normal curvature we refer the reader to see \cite{AP01} and \cite{MPDC76 }.
\end{defn}
\section{rectifying curves on a surface}
It is well-known that at every point of any smooth curve in $\mathbb{R}^3$ there exists a Serret-Frenet frame. Now if the curve lies on a surface $S$ then there exists another frame of reference $\{\vec{T},\vec{N},\vec{T}\times\vec{N}\}$ at each point of the curve related to the surface. Throughout this section and the next section we will deduce the components of position vectors of rectifying and osculating curves along $\vec{T},\ \vec{N},\ \vec{T}\times\vec{N}$ and then investigate their invariancy under surface isometry.
The equation of a rectifying curve on $S$ is given by
\begin{eqnarray}\label{r1}
\nonumber
\gamma(s)&=&\lambda(s)(\phi_uu'+\phi_vv')+\frac{\mu(s)}{\kappa(s)}\Big[u'^3(\phi_u\times \phi_{uu})+2u'^2v'(\phi_u\times \phi_{uv})+u'v'^2(\phi_u\times \phi_{vv})\\
&&+u'^2v'(\phi_v\times \phi_{uu})+2u'v'^2(\phi_v\times \phi_{uv})+v'^3(\phi_v\times \phi_{vv})+(v''u'-v'2u'')\vec{N}\Big],
\end{eqnarray}
for some functions $\lambda(s)$ and $\mu(s)$.
\begin{thm}
 Let $f:S\rightarrow\bar{S} $ be an isometry. If $\gamma$ and $\bar{\gamma}$ are  rectifying curves on $S$ and $\bar{S}$ respectively, then for the component of $\gamma(s)$ along any tangent vector $\vec{T}=a\phi_u+b\phi_v,\ a,\ b\in \mathbb{R}$, to the surface $S$ at $\gamma(s)$, the following holds:
 \begin{eqnarray}\label{rt1} 
 \bar{\gamma}\cdot \vec{\bar{T}}-\gamma\cdot \vec{ T}=\frac{\mu(s)}{\kappa(s)}(av'+bu')\Big(\bar{\kappa}_n(s)-\kappa_n(s)\Big).
 \end{eqnarray}
\end{thm}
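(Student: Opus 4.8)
The plan is to evaluate $\gamma\cdot\vec T$ directly from \eqref{r1} with $\vec T=a\phi_u+b\phi_v$ and then to track how the answer changes when we pass to the isometric surface $\bar S$. First I would write $\gamma=\lambda\vec t+\frac{\mu}{\kappa}\vec B$, where $\vec B:=\kappa\vec b$ is the bracketed vector in \eqref{r1}, and compute the two pieces $\lambda\,(\vec t\cdot\vec T)$ and $\frac{\mu}{\kappa}\,(\vec B\cdot\vec T)$ separately, since they behave very differently under isometry.

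For the tangential piece, expanding $\vec t\cdot\vec T=(\phi_uu'+\phi_vv')\cdot(a\phi_u+b\phi_v)$ produces only the first fundamental form coefficients $E=\phi_u\cdot\phi_u$, $F=\phi_u\cdot\phi_v$, $G=\phi_v\cdot\phi_v$, giving $\lambda\big(au'E+(av'+bu')F+bv'G\big)$. For the binormal piece I would exploit two structural facts: since $\vec N$ is orthogonal to $\phi_u$ and $\phi_v$, the $\vec N$-term of \eqref{r1} drops out of $\vec B\cdot\vec T$; and every remaining summand is a cross product dotted with $\phi_u$ or $\phi_v$, that is, a scalar triple product, which by the cyclic identities together with $\phi_u\times\phi_v=\vec N$ reduces to a coefficient of the second fundamental form. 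After substituting $L=\vec N\cdot\phi_{uu}$, $M=\vec N\cdot\phi_{uv}$, $N=\vec N\cdot\phi_{vv}$, the six cross terms regroup so that $\vec B\cdot\phi_u$ and $\vec B\cdot\phi_v$ are each $\kappa_n$ times a single factor $v'$ or $u'$, where $\kappa_n=Lu'^2+2Mu'v'+Nv'^2$ as in Definition 2.2. Hence $\vec B\cdot\vec T$ equals $\kappa_n$ times the linear factor $av'+bu'$ appearing in \eqref{rt1}, and altogether $\gamma\cdot\vec T=\lambda\big(au'E+(av'+bu')F+bv'G\big)+\frac{\mu}{\kappa}(av'+bu')\kappa_n$.

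With this closed form in hand I would write the identical expression for $\bar\gamma\cdot\vec{\bar T}$ on $\bar S$, retaining the same coordinate functions $u(s),v(s)$, the same constants $a,b$, and the same coefficients $\lambda,\mu,\kappa$, and letting only the fundamental forms change. Subtracting the two, the tangential contribution cancels because an isometry preserves the first fundamental form, so that $\bar E=E$, $\bar F=F$, $\bar G=G$; what survives is precisely $\frac{\mu}{\kappa}(av'+bu')(\bar\kappa_n-\kappa_n)$, which is \eqref{rt1}.

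The step I expect to be the main obstacle is the triple-product bookkeeping in the binormal piece: one must track the orientation sign in each $(\phi_{\ast}\times\phi_{\ast\ast})\cdot\phi_{\ast}$ and verify that the surviving coefficients of $u'$ and $v'$ assemble exactly into $\kappa_n$ rather than into some other quadratic in $u',v'$. Conceptually, the essential point to justify is the clean separation of intrinsic from extrinsic data: the tangential part is built solely from the first fundamental form and is therefore isometry-invariant, while the entire dependence on the second fundamental form is channelled through the single scalar $\kappa_n$. It is this funnelling that forces the difference of the two components to involve nothing beyond $\bar\kappa_n-\kappa_n$, and it is also what makes \eqref{rt1} the natural starting point for the invariance discussion announced in the abstract.
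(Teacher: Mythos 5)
Your proposal follows essentially the same route as the paper: expand $\gamma\cdot\phi_u$ and $\gamma\cdot\phi_v$ from \eqref{r1}, observe that the $\lambda$-part involves only $E,F,G$ and hence cancels under isometry, and that the $\mu$-part collapses via the scalar triple products to $v'\kappa_n$ and $u'\kappa_n$ respectively, then combine linearly with $a$ and $b$. The only cosmetic difference is that you dot with $\vec T$ in one step where the paper treats $\phi_u$ and $\phi_v$ separately; the sign and normalization bookkeeping you flag is glossed over in the paper in exactly the same way.
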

\begin{proof}
From $(\ref{r1})$, we see that
\begin{equation}\label{rt2}
\gamma\cdot\phi_u=\lambda(Eu'+Fv')+\frac{\mu(s)}{\kappa(s)}\Big[Lu'^2 v' +2Mv'^2 u'+Nv'^3 \Big].
\end{equation}
Similarly for the rectifying curve $\bar{\gamma}(s)$ on $\bar{S}$, we have
\begin{equation*}
\bar{\gamma}\cdot\bar{\phi}_u=\lambda(\bar{E}u'+\bar{F}v')+\frac{\mu(s)}{\kappa(s)}\Big[\bar{L}u'^2 v' +2\bar{M}v'^2 u'+\bar{N}v'^3 \Big].
\end{equation*}
Since $f$ is isometry, hence $\bar{E}=E$, $\bar{F}=F$ and $\bar{G}=G$. Therefore
\begin{equation}\label{rt3}
\bar{\gamma}\cdot\bar{\phi}_u=\lambda(Eu'+Fv')+\frac{\mu(s)}{\kappa(s)}\Big[\bar{L}u'^2 v' +2\bar{M}v'^2 u'+\bar{N}v'^3\Big].
\end{equation}
Taking the difference of $(\ref{rt2})$ and $(\ref{rt3})$, we get
\begin{equation}\label{rt4}
\bar{\gamma}\cdot\bar{\phi}_u-\gamma\cdot\phi_u=\frac{v'\mu(s)}{\kappa(s)}\Big(\bar{\kappa}_n(s)-\kappa_n(s)\Big).
\end{equation}
Similarly the following relation holds:
\begin{equation}\label{rt5}
\bar{\gamma}\cdot\bar{\phi}_v-\gamma\cdot\phi_v=\frac{u'\mu(s)}{\kappa(s)}\Big(\bar{\kappa}_n(s)-\kappa_n(s)\Big).
\end{equation}
Now with the help of $(\ref{rt4})$ and $(\ref{rt5})$ we get
\begin{eqnarray*}
\bar{\gamma}\cdot\vec{\bar{T}}-\gamma\cdot T &=&\bar{\gamma}\cdot(a\bar{\phi}_u+b\bar{\phi}_v)-\gamma\cdot(a\phi
_u+b\phi
_v),\\
&=& a(\bar{\gamma}\cdot \bar{\phi}_u -\gamma\cdot \phi_u)+b(\bar{\gamma}\cdot \bar{\phi}_v -\gamma\cdot \phi_v),\\
&=& a\frac{v'\mu(s)}{\kappa(s)}\Big(\bar{\kappa}_n(s)-\kappa_n(s)\Big)+b\frac{u'\mu(s)}{\kappa(s)}\Big(\bar{\kappa}_n(s)-\kappa_n(s)\Big),\\
&=&
\frac{\mu(s)}{\kappa(s)}(av'+bu')\Big(\bar{\kappa}_n(s)-\kappa_n(s)\Big).
\end{eqnarray*}
This proves our claim.
\end{proof}
 \begin{thm}
  Let $f:S\rightarrow\bar{S} $ be an isometry. If $\gamma$ and $\bar{\gamma}$ are  rectifying curves on $S$ and $\bar{S}$ respectively, then for the component of $\gamma(s)$ along $\vec{T}\times\vec{N}$, the following holds:
 \begin{equation}\label{rt6}
 \bar{\gamma}\cdot(\vec{\bar{T}}\times\vec{\bar{N}})-\gamma\cdot(\vec{T}\times\vec{N})=\frac{\mu(s)}{\kappa(s)}\Big(\bar{\kappa}_n(s)-\kappa_n(s)\Big)\Big\{vaF-uaEb+vbG-ubF\Big\}.
 \end{equation}
 \end{thm}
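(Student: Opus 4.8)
The plan is to mirror the proof of Theorem 3.1: the invariance of the first fundamental form under the isometry has already been exploited to produce the differences $(\ref{rt4})$ and $(\ref{rt5})$, so the only genuinely new ingredient here is to re-express the vector $\vec{T}\times\vec{N}$ in the coordinate basis $\{\phi_u,\phi_v\}$. Once $\gamma\cdot(\vec{T}\times\vec{N})$ is written as a linear combination of $\gamma\cdot\phi_u$ and $\gamma\cdot\phi_v$ with coefficients built from $E,F,G$, the computation reduces to substituting $(\ref{rt4})$ and $(\ref{rt5})$ and collecting terms.

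First I would write $\vec{N}=\frac{1}{W}(\phi_u\times\phi_v)$ with $W=\sqrt{EG-F^2}$ and expand, using $\vec{T}=a\phi_u+b\phi_v$, the product $\vec{T}\times\vec{N}=a(\phi_u\times\vec{N})+b(\phi_v\times\vec{N})$. The two cross products are then reduced by the vector triple product identity $\vec{X}\times(\vec{Y}\times\vec{Z})=\vec{Y}(\vec{X}\cdot\vec{Z})-\vec{Z}(\vec{X}\cdot\vec{Y})$, which gives $\phi_u\times(\phi_u\times\phi_v)=F\phi_u-E\phi_v$ and $\phi_v\times(\phi_u\times\phi_v)=G\phi_u-F\phi_v$. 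Substituting yields $\vec{T}\times\vec{N}=\frac{1}{W}\big[(aF+bG)\phi_u-(aE+bF)\phi_v\big]$, and hence $\gamma\cdot(\vec{T}\times\vec{N})=\frac{1}{W}\big[(aF+bG)(\gamma\cdot\phi_u)-(aE+bF)(\gamma\cdot\phi_v)\big]$, with the exactly analogous expression holding on $\bar{S}$.

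Next I would take the difference of the barred and unbarred components. Because $f$ is an isometry we have $E=\bar E$, $F=\bar F$, $G=\bar G$, so $W=\bar W$ and the scalar coefficients $aF+bG$ and $aE+bF$ are common to both expressions; the difference therefore collapses to $\frac{1}{W}\big[(aF+bG)(\bar\gamma\cdot\bar\phi_u-\gamma\cdot\phi_u)-(aE+bF)(\bar\gamma\cdot\bar\phi_v-\gamma\cdot\phi_v)\big]$. Inserting $(\ref{rt4})$ and $(\ref{rt5})$ and factoring out $\frac{\mu(s)}{\kappa(s)}\big(\bar\kappa_n(s)-\kappa_n(s)\big)$ leaves the bracket $\{\,v'aF-u'aE+v'bG-u'bF\,\}$, which is precisely the claimed expression (the stray factor in $uaEb$ being a typographical slip). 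The main point to be careful about is the triple-product reduction of $\vec{T}\times\vec{N}$ into the coordinate frame, since this is where $E,F,G$ enter; the invariance of the scalar prefactor $W=\sqrt{EG-F^2}$ under the isometry is exactly what ensures it contributes nothing to the difference, so no other obstacle arises beyond the bookkeeping already carried out for Theorem 3.1.
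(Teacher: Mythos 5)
Your proposal follows essentially the same route as the paper: expand $\vec{T}\times\vec{N}$ in the coordinate basis as $(aF+bG)\phi_u-(aE+bF)\phi_v$ (up to normalization) and then substitute the differences $(\ref{rt4})$ and $(\ref{rt5})$, using $E=\bar{E}$, $F=\bar{F}$, $G=\bar{G}$ to pull the coefficients out of the difference. The one divergence is that you carry the unit-normal factor $1/W$ with $W=\sqrt{EG-F^2}$, which the paper silently drops; this factor does not in fact cancel in the difference but survives as a common overall factor $1/W=1/\bar{W}$, so your derivation literally yields $(\ref{rt6})$ divided by $W$ --- harmless for the invariance corollaries, but your claim that $W$ ``contributes nothing to the difference'' should be phrased as ``contributes only an invariant nonzero factor.''
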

 \begin{proof}
 The component of $\gamma(s)$ along $\vec{T}\times\vec{N}$ is given by
 \begin{eqnarray*}
 \gamma\cdot(\vec{T}\times\vec{N})&=& \gamma\cdot\{(a\varphi_u+b\varphi_v)\times\vec{N}\},\\
 &=& \gamma\cdot\{Fa\phi_u-Ea\phi_v+Gb\phi_u-Fb\phi_v\},\\
 &=& (Fa+Gb)\gamma\cdot\phi_u-(Ea+Fb)\gamma\cdot\phi_v.
 \end{eqnarray*}
 Hence 
 \begin{equation*}
 \bar{\gamma}\cdot(\vec{\bar{T}}\times\vec{\bar{N}})-\gamma\cdot(\vec{T}\times\vec{N})=(Fa+Gb)(\bar{\gamma}\cdot\bar{\phi}_u-\gamma\cdot\phi_u)-(Ea+Fb)(\bar{\gamma}\cdot\bar{\phi}_v-\gamma\cdot\phi_v).
 \end{equation*}
 Now using $(\ref{rt4})$ and $(\ref{rt5})$ we get
 \begin{equation}
 \bar{\gamma}\cdot(\vec{\bar{T}}\times\vec{\bar{N}})-\gamma\cdot(\vec{T}\times\vec{N})=\frac{\mu(s)}{\kappa(s)}\Big(\bar{\kappa}_n(s)-\kappa_n(s)\Big)\Big\{vaF-uaE+vbG-ubF\Big\}.
 \end{equation}
 This proves the result.
 \end{proof}
 \begin{cor}\label{rcor2}
  Let $f:S\rightarrow \bar{S}$ be an isometry and $\gamma(s)$ be an rectifying curve on $S$. Then the component of the rectifying curve $\gamma(s)$ along $\vec{T}\times\vec{N}$ (respectively, $\vec{T}$) is invariant iff any one of the following holds:
  \begin{itemize}
  \item[(i)]the position vector of $\gamma(s)$ is in the direction of tangent vector to $\gamma$.
  \item[(ii)] The normal curvature is invariant.
  \end{itemize}
  \end{cor}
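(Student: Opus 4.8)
The plan is to deduce the corollary directly from the two preceding theorems, since equations $(\ref{rt1})$ and $(\ref{rt6})$ already isolate the obstruction to invariance. First I would record what invariance means: the component of $\gamma$ along $\vec{T}$ (respectively $\vec{T}\times\vec{N}$) is invariant under the isometry $f$ precisely when $\bar{\gamma}\cdot\vec{\bar{T}}=\gamma\cdot\vec{T}$ (respectively $\bar{\gamma}\cdot(\vec{\bar{T}}\times\vec{\bar{N}})=\gamma\cdot(\vec{T}\times\vec{N})$), that is, when the left-hand side of $(\ref{rt1})$ (respectively $(\ref{rt6})$) vanishes. The whole argument then reduces to deciding when the corresponding right-hand side is zero.

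Next I would examine the factorisation of the right-hand sides. Each is a product of three factors: the scalar $\frac{\mu(s)}{\kappa(s)}$, a purely kinematic-metric factor ($av'+bu'$ in $(\ref{rt1})$, and $vaF-uaE+vbG-ubF$ in $(\ref{rt6})$), and the normal-curvature difference $\bar{\kappa}_n(s)-\kappa_n(s)$. Since $\gamma$ is a regular curve possessing a Frenet frame we have $\kappa(s)\neq 0$, so the factor $\frac{\mu(s)}{\kappa(s)}$ vanishes iff $\mu(s)=0$. Assuming the middle factor is nonzero, the whole product vanishes iff $\mu(s)=0$ or $\bar{\kappa}_n(s)=\kappa_n(s)$.

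Then I would translate $\mu(s)=0$ geometrically. By definition a rectifying curve satisfies $\gamma=\lambda\vec{t}+\mu\vec{b}$, so $\mu\equiv 0$ forces $\gamma=\lambda\vec{t}$; that is, the position vector of $\gamma$ points along the tangent vector $\vec{t}$, which is exactly alternative (i). The remaining possibility $\bar{\kappa}_n(s)=\kappa_n(s)$ is precisely the statement that the normal curvature is invariant, alternative (ii). The converse direction is immediate: if either (i) or (ii) holds, the corresponding right-hand side of $(\ref{rt1})$ or $(\ref{rt6})$ is zero and hence the component is invariant. The same two alternatives govern both the $\vec{T}$-component and the $\vec{T}\times\vec{N}$-component, so the two cases of the corollary are proved simultaneously.

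I expect the main subtlety to lie in the middle factor. For the ``iff'' to be genuine one must ensure that $av'+bu'$ (respectively $vaF-uaE+vbG-ubF$) does not vanish, for otherwise the component would be invariant for reasons unrelated to (i) or (ii). I would handle this by reading the conclusion as holding for an arbitrary (generic) tangent vector $\vec{T}=a\phi_u+b\phi_v$: requiring invariance for all admissible choices of the constants $a,b$ rules out the degenerate vanishing of the middle factor, leaving $\mu(s)=0$ or $\bar{\kappa}_n(s)=\kappa_n(s)$ as the only possible causes, and these are precisely the two stated conditions.
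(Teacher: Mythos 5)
Your proof follows essentially the same route as the paper: both read off invariance from the vanishing of the right-hand sides of $(\ref{rt1})$ and $(\ref{rt6})$, conclude $\mu(s)=0$ or $\bar{\kappa}_n(s)=\kappa_n(s)$, and translate $\mu\equiv 0$ into the position vector lying along $\vec{t}$. Your closing remark about the middle factor $av'+bu'$ (respectively $vaF-uaE+vbG-ubF$) possibly vanishing is a genuine point of care that the paper's own proof silently skips, and your resolution --- demanding invariance for arbitrary choices of $a,b$ --- is the right way to make the ``only if'' direction honest.
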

  \begin{proof}
  From $(\ref{rt6})$,  $\bar{\gamma}(s)\cdot(\vec{\bar{T}}\times\vec{\bar{N}})=\gamma(s)\cdot(\vec{T}\times\vec{N})$ (respectively, $\bar{\gamma}(s)\cdot\vec{\bar{T}}=\gamma(s)\cdot T$) iff 
  \begin{eqnarray*}
  \frac{\mu(s)}{\kappa(s)}\Big(\bar{\kappa}_n(s)-\kappa_n(s)\Big)\Big\{vaF-uaE+vbG-ubF\Big\}=0\\
  \Big(respectively,\ \frac{\mu(s)}{\kappa(s)}(av'+bu')\Big(\bar{\kappa}_n(s)-\kappa_n(s)\Big)=0\Big)\\
  \text{i.e., iff } \mu(s)=0 \text{ or }\bar{\kappa}_n(s)=\kappa_n(s).
  \end{eqnarray*}
  If $\mu(s)=0$ then from the definition of rectifying curve, we see that $\gamma(s)=\lambda(s)\vec{t}(s)$, i.e., the position vector of the rectifying curve $\gamma(s)$ is in the direction of tangent vector to itself.\\
  Otherwise if $\mu(s)\neq 0$, then $\bar{\kappa}_n(s)=\kappa_n(s)$, i.e., the normal curvature is invariant.
  \end{proof}
  \begin{cor}
  Let $f:S\rightarrow \bar{S}$ be an isometry and $\gamma(s)$ be an rectifying curve on $S$. If the component of the rectifying curve $\gamma(s)$ along $\vec{T}\times\vec{N}$ (respectively, $\vec{T}$) is invariant and the position vector of $\gamma(s)$ is not in the direction of the tangent vector to $\gamma(s)$, then $\gamma(s)$ is asymptotic iff $\bar{\gamma}(s)$ is asymptotic.
  \end{cor}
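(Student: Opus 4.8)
The plan is to reduce the statement to the dichotomy already established in Corollary \ref{rcor2}, combined with the standard characterization of asymptotic curves. Recall that a curve on a surface is asymptotic precisely when its normal curvature vanishes identically, i.e. $\kappa_n(s)\equiv 0$; likewise $\bar{\gamma}$ is asymptotic iff $\bar{\kappa}_n(s)\equiv 0$. Hence the whole question reduces to whether the two conditions $\kappa_n(s)\equiv 0$ and $\bar{\kappa}_n(s)\equiv 0$ are equivalent under the given hypotheses.

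First I would feed the hypotheses into Corollary \ref{rcor2}. We are told that the component of $\gamma(s)$ along $\vec{T}\times\vec{N}$ (respectively $\vec{T}$) is invariant, so one of the two alternatives (i) or (ii) of that corollary must hold. Since we are additionally assuming that the position vector of $\gamma(s)$ is not in the direction of the tangent vector to $\gamma(s)$, alternative (i) --- which by the proof of Corollary \ref{rcor2} corresponds to $\mu(s)=0$ and hence $\gamma(s)=\lambda(s)\vec{t}(s)$ --- is excluded. Therefore alternative (ii) must hold, that is, the normal curvature is invariant: $\bar{\kappa}_n(s)=\kappa_n(s)$.

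With the identity $\bar{\kappa}_n(s)=\kappa_n(s)$ in hand, the equivalence is immediate: $\gamma(s)$ is asymptotic iff $\kappa_n(s)\equiv 0$ iff $\bar{\kappa}_n(s)\equiv 0$ iff $\bar{\gamma}(s)$ is asymptotic. This closes the chain of implications and establishes the corollary.

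There is no genuinely hard step here; the only points requiring care are to correctly disentangle the two alternatives of Corollary \ref{rcor2} --- specifically, to observe that ruling out the tangential case forces $\mu(s)\neq 0$ and thereby selects the invariance of the normal curvature --- and to invoke the definition of an asymptotic curve as one whose normal curvature vanishes identically. Everything else is a formal equivalence, so the main (modest) obstacle is purely the bookkeeping of which alternative survives under the stated hypothesis.
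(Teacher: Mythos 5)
Your proposal is correct and follows essentially the same route as the paper: both invoke Corollary \ref{rcor2} to rule out the tangential alternative and thereby deduce $\bar{\kappa}_n(s)=\kappa_n(s)$, after which the equivalence of the asymptotic conditions is immediate from $\kappa_n\equiv 0 \Leftrightarrow \bar{\kappa}_n\equiv 0$. No gaps; your version is in fact slightly more careful in spelling out why alternative (i) is excluded.
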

  \begin{proof}
  From Corollary $3.2.1.$,  $\bar{\gamma}(s)\cdot(\vec{\bar{T}}\times\vec{\bar{N}})=\gamma(s)\cdot(\vec{T}\times\vec{N})$ (respectively  $\bar{\gamma}(s)\cdot\vec{\bar{T}}=\gamma(s)\cdot T$) and the position vector of $\gamma(s)$ is not in the direction of the tangent vector to $\gamma(s)$ iff
  $\kappa_n(s)=\bar{\kappa}_n(s)$.\\
  Therefore $\gamma(s)$ is asymptotic iff $\kappa_n(s)=0$, i.e., iff $\bar{\kappa}_n(s)=0$, i.e., iff $\bar{\gamma}(s)$ is asymptotic.
  \end{proof}

\section{osculating curves on a surface}
 The equation of an osculating curve on a surface patch $\phi(u,v)$ of $S$ is given by 
  \begin{equation}\label{o1}
 \alpha(s)=\lambda_1(s)(\phi_uu'+\phi_vv')+\frac{\lambda_2(s)}{\kappa(s)}\Big\{(\phi_{uu}u'^2+2\phi_{uv}u'v'+\phi_{vv}v'^2)+(\phi_uu''+\phi_vv'')\Big\},
 \end{equation}
 for some functions $\lambda_1(s)$ and $\lambda_2(s)$.\\
 \begin{thm}
 Let $f:S\rightarrow\bar{S} $ be an isometry. If $\alpha$ and $\bar{\alpha}$ are respectively osculating curves on $S$ and $\bar{S}$, then for the normal component of $\alpha(s)$, we have
 \begin{equation}\label{ot1}
 \bar{\alpha}\cdot\vec{\bar{N}}-\alpha\cdot\vec{N}=\frac{\lambda_2(s)}{\kappa(s)}\Big(\bar{\kappa}_n(s)-\kappa_n(s)\Big).
 \end{equation}
 \end{thm}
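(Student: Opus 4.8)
The plan is to read off the normal component $\alpha\cdot\vec{N}$ directly from the defining equation (\ref{o1}) of the osculating curve, repeat the identical computation on $\bar{S}$, and subtract the two. The key structural feature to exploit is that $\vec{N}$ is orthogonal to the tangent plane, so dotting with $\vec{N}$ annihilates every term built from $\phi_u$ and $\phi_v$ and leaves only the second-order partials, which are precisely what the second fundamental form measures.

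First I would take the inner product of (\ref{o1}) with the surface normal $\vec{N}$. Since $\phi_u\cdot\vec{N}=\phi_v\cdot\vec{N}=0$, the entire $\lambda_1$-term drops out, and within the $\lambda_2$-bracket the part $(\phi_uu''+\phi_vv'')$ vanishes as well. The surviving contribution comes only from the second-order partials, and substituting $\phi_{uu}\cdot\vec{N}=L$, $\phi_{uv}\cdot\vec{N}=M$, $\phi_{vv}\cdot\vec{N}=N$ yields
\begin{equation*}
\alpha\cdot\vec{N}=\frac{\lambda_2(s)}{\kappa(s)}\big(Lu'^2+2Mu'v'+Nv'^2\big)=\frac{\lambda_2(s)}{\kappa(s)}\kappa_n(s),
\end{equation*}
where the last equality uses the formula $\kappa_n=Lu'^2+2Mu'v'+Nv'^2$ recorded in Definition 2.2.

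Next I would run the same computation for the osculating curve $\bar{\alpha}$ on $\bar{S}$, so that $\bar{\alpha}\cdot\vec{\bar{N}}=\frac{\lambda_2(s)}{\kappa(s)}\bar{\kappa}_n(s)$, with the barred coefficients $\bar{L},\bar{M},\bar{N}$ assembling into $\bar{\kappa}_n$. Subtracting the two expressions gives (\ref{ot1}) immediately. The only point needing care — and it is a matter of convention rather than a genuine obstacle — is the use of the same functions $\lambda_2(s)$ and $\kappa(s)$ for both curves, consistent with the framework already adopted in the proofs of Theorems 3.1 and 3.2. Notice that, in contrast to the tangential components treated earlier, no invariance of the first fundamental form is invoked: the normal projection isolates the second fundamental form exactly, which is why the difference collapses cleanly to the jump $\bar{\kappa}_n(s)-\kappa_n(s)$ in normal curvature.
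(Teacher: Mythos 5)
Your proposal is correct and follows essentially the same route as the paper: dot equation (\ref{o1}) with $\vec{N}$ so that all tangential terms vanish, identify the surviving second-order terms as $\frac{\lambda_2(s)}{\kappa(s)}(Lu'^2+2Mu'v'+Nv'^2)=\frac{\lambda_2(s)}{\kappa(s)}\kappa_n(s)$, repeat for $\bar{\alpha}$ on $\bar{S}$, and subtract. Your added remark that no invariance of the first fundamental form is needed here is accurate and slightly sharper than the paper's own exposition.
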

 \begin{proof}
From $(\ref{o1})$, we see that
\begin{eqnarray}\label{o2}
\nonumber
\alpha\cdot\vec{N}&=&\lambda_1(s)(\phi_uu'+\phi_vv')+\frac{\lambda_2(s)}{\kappa(s)}\Big\{(\phi_{uu}u'^2+2\phi_{uv}u'v'+\phi_{vv}v'^2)\\
\nonumber
&&+(\phi_uu''+\phi_vv'')\Big\}\cdot\vec{N},\\
\nonumber
&=&\frac{\lambda_2(s)}{\kappa(s)}\Big(\phi_{uu}\cdot\vec{N}u'^2+2\phi_{uv}\cdot\vec{N}u'v'+\phi_{vv}\cdot\vec{N}v'^2\Big),\\
&=&\frac{\lambda_2(s)}{\kappa(s)}\Big(Lu'^2+2Mu'v'+Nv'^2\Big).
\end{eqnarray}
Similarly for the isometry of $\alpha$ and $\vec{N}$, we have
\begin{equation}\label{o3}
\bar{\alpha}\cdot\vec{\bar{N}}=\frac{\lambda_2(s)}{\kappa(s)}\Big(\bar{L}u'^2+2\bar{M}u'v'+\bar{N}v'^2\Big).
\end{equation}
In view of $(\ref{o2})$ and $(\ref{o3})$, we obtain
\begin{eqnarray*}
 \bar{\alpha}\cdot\vec{\bar{N}}-\alpha\cdot\vec{N}&=&\frac{\lambda_2(s)}{\kappa(s)}\Big(\bar{L}u'^2+2\bar{M}u'v'+\bar{N}v'^2\Big)-\frac{\lambda_2(s)}{\kappa(s)}\Big(Lu'^2+2Mu'v'+Nv'^2\Big),\\
 &=&\frac{\lambda_2(s)}{\kappa(s)}\Big(\bar{\kappa}_n(s)-\kappa_n(s)\Big).
\end{eqnarray*}
This proves our claim.
 \end{proof}
 \begin{cor}\label{ocor1}
 Let $f:S\rightarrow \bar{S}$ be an isometry and $\alpha(s)$ be an osculating curve on $S$. Then the normal component of the osculating curve $\alpha(s)$ is invariant iff any one of the following holds:
 \begin{itemize}
 \item[(i)]the position vector of $\alpha(s)$ is in the direction of tangent vector to $\alpha$.
 \item[(ii)] The normal curvature is invariant.
 \end{itemize}
 \end{cor}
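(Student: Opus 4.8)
The plan is to derive this corollary directly from Theorem~4.1, since equation~(\ref{ot1}) has already isolated the entire obstruction to invariance of the normal component. First I would translate the hypothesis ``the normal component of $\alpha(s)$ is invariant'' into the algebraic statement $\bar{\alpha}\cdot\vec{\bar{N}}=\alpha\cdot\vec{N}$, i.e. the vanishing of the left-hand side of (\ref{ot1}). By that theorem, this is equivalent to
$$\frac{\lambda_2(s)}{\kappa(s)}\Big(\bar{\kappa}_n(s)-\kappa_n(s)\Big)=0.$$
Since $\alpha$ carries a well-defined Serret--Frenet frame, its curvature satisfies $\kappa(s)\neq 0$, so I would discard the factor $1/\kappa(s)$ and reduce the invariance condition to the product equation $\lambda_2(s)\big(\bar{\kappa}_n(s)-\kappa_n(s)\big)=0$.

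Next I would perform the case split dictated by this product, exactly mirroring the reasoning used for Corollary~\ref{rcor2}. If $\lambda_2(s)=0$, then recalling from (\ref{o1}) that the second summand is $\lambda_2(s)\vec{n}(s)$ and the first is $\lambda_1(s)\vec{t}(s)$, the position vector collapses to $\alpha(s)=\lambda_1(s)\vec{t}(s)$, which is precisely condition~(i). If instead $\lambda_2(s)\neq 0$, then the vanishing of the product forces $\bar{\kappa}_n(s)=\kappa_n(s)$, which is condition~(ii). For the converse direction I would simply observe that either (i) or (ii) makes the product, and hence the right-hand side of (\ref{ot1}), equal to zero, so the equivalence is genuinely two-sided.

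I do not anticipate any real obstacle here, because the nontrivial analytic content---the expression for $\bar{\alpha}\cdot\vec{\bar{N}}-\alpha\cdot\vec{N}$ in terms of the two normal curvatures---is already supplied by Theorem~4.1. The only point requiring a moment of care is the identification of the bracketed vector in (\ref{o1}) with $\kappa(s)\vec{n}(s)$, so that $\lambda_2(s)=0$ truly means the position vector lies along $\vec{t}$ and not merely that one coordinate expression vanishes; this is the same normalization already exploited in deriving (\ref{o2}). Beyond that, the proof is a transcription of the argument for Corollary~\ref{rcor2}, with (\ref{ot1}) playing the role that (\ref{rt6}) played there.
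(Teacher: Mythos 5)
Your proposal is correct and follows essentially the same route as the paper's own proof: apply the identity $(\ref{ot1})$, reduce invariance of the normal component to the vanishing of $\frac{\lambda_2(s)}{\kappa(s)}\bigl(\bar{\kappa}_n(s)-\kappa_n(s)\bigr)$, and split into the cases $\lambda_2(s)=0$ (giving condition (i)) and $\lambda_2(s)\neq 0$ (giving condition (ii)). Your added remark that $\lambda_2(s)=0$ must be interpreted via the identification of the bracketed term in $(\ref{o1})$ with $\kappa(s)\vec{n}(s)$ is a sensible precaution but does not change the argument.
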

 \begin{proof}
 From $(\ref{ot1})$,  $\bar{\alpha}\cdot\vec{\bar{N}}=\alpha\cdot \vec{N}$ iff $\frac{\lambda_2(s)}{\kappa(s)}\Big(\bar{\kappa}_n(s)-\kappa_n(s)\Big)=0$,
 \begin{equation*}
 \text{i.e., iff } \lambda_2(s)=0 \text{ or }\bar{\kappa}_n(s)=\kappa_n(s).
 \end{equation*}
 If $\lambda_2(s)=0$ then from the definition of osculating curve, we have $\alpha(s)=\lambda_1(s)\vec{t}(s)$, i.e., the position vector of the osculating curve $\alpha(s)$ is in the direction of tangent vector to itself.\\
 Otherwise if $\lambda_2(s)\neq 0$, then $\kappa_n(s)=\bar{\kappa}_n(s)$, i.e., the normal curvature is invariant.
 \end{proof}
 \begin{cor}
 Let $f:S\rightarrow \bar{S}$ be an isometry and $\alpha(s)$ be an osculating curve on $S$. If the normal component of the osculating curve $\alpha(s)$ is invariant and the position vector of $\alpha(s)$ is not in the direction of the tangent vector to $\alpha$, then $\alpha(s)$ is asymptotic iff $\bar{\alpha}(s)$ is asymptotic.
 \end{cor}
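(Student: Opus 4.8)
The plan is to deduce the statement directly from the equivalence recorded in Corollary~\ref{ocor1}, together with the elementary fact that a curve on a surface is asymptotic precisely when its normal curvature vanishes identically. Accordingly, I would first restate the conclusion analytically: ``$\alpha(s)$ is asymptotic'' means $\kappa_n(s)=0$, and ``$\bar{\alpha}(s)$ is asymptotic'' means $\bar{\kappa}_n(s)=0$. In this language there is nothing to compute beyond transporting a single curvature identity across the isometry.

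Next I would invoke Corollary~\ref{ocor1}. It asserts that the normal component of the osculating curve is invariant if and only if either (i) the position vector of $\alpha(s)$ is in the direction of its tangent vector (equivalently $\lambda_2(s)=0$), or (ii) the normal curvature is invariant, $\bar{\kappa}_n(s)=\kappa_n(s)$. The two hypotheses of the present corollary are exactly that the normal component is invariant while alternative (i) is excluded; hence Corollary~\ref{ocor1} forces alternative (ii), yielding $\bar{\kappa}_n(s)=\kappa_n(s)$.

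With this equality established, the conclusion follows by chaining equivalences:
\begin{equation*}
\alpha(s)\ \text{asymptotic} \iff \kappa_n(s)=0 \iff \bar{\kappa}_n(s)=0 \iff \bar{\alpha}(s)\ \text{asymptotic},
\end{equation*}
the central step being the invariance $\bar{\kappa}_n(s)=\kappa_n(s)$ just obtained. I expect no analytic difficulty here; the only point demanding care is the logical elimination step, namely verifying that discarding alternative (i) of Corollary~\ref{ocor1} genuinely leaves (ii) as the sole possibility. This holds precisely because invariance of the normal component is assumed from the outset, so the result is a direct corollary of Corollary~\ref{ocor1} and the definition of an asymptotic curve.
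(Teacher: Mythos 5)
Your proposal is correct and follows essentially the same route as the paper: both arguments use the hypotheses to eliminate alternative (i) of Corollary~4.1.1, extract the invariance $\bar{\kappa}_n(s)=\kappa_n(s)$, and then chain the equivalences $\alpha$ asymptotic $\iff \kappa_n=0 \iff \bar{\kappa}_n=0 \iff \bar{\alpha}$ asymptotic. Your write-up is in fact slightly more careful than the paper's, since you make the elimination step explicit rather than compressing it into a single ``iff''.
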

 \begin{proof}
 From Corollary $4.1.1.$,  $\bar{\alpha}(s)\cdot\vec{\bar{N}}=\alpha(s)\cdot \vec{N}$ and the position vector of $\alpha(s)$ is not in the direction of the tangent vector to $\alpha(s)$ iff $\kappa_n(s)=\bar{\kappa}_n(s)$.
 Therefore $\alpha(s)$ is asymptotic iff $\kappa_n(s)=0$, i.e., iff $\bar{\kappa}_n(s)=0$, i.e., iff $\bar{\alpha}(s)$ is asymptotic.
 \end{proof}
 \begin{thm}
Let $f:S\rightarrow\bar{S} $ be an isometry. If $\alpha$ and $\bar{\alpha}$ are respectively osculating curves on $S$ and $\bar{S}$, then the component of the osculating curve $\alpha(s)$ along $\vec{T}\times\vec{N}$ is invariant, i.e., $\alpha\cdot(\vec{T}\times\vec{N})=\bar{\alpha}\cdot(\vec{\bar{T}}\times\vec{\bar{N}})$
 \end{thm}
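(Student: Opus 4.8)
The plan is to reduce the claim to the invariance of the two tangential projections $\alpha\cdot\phi_u$ and $\alpha\cdot\phi_v$, and then to verify that each of these is expressible purely through the coefficients of the first fundamental form and their first derivatives, so that isometry forces them to agree. Writing the tangent vector as $\vec{T}=a\phi_u+b\phi_v$ and arguing exactly as in the proof of Theorem 3.2, I would first use $\vec{T}\times\vec{N}=(Fa+Gb)\phi_u-(Ea+Fb)\phi_v$ to write
\[
\alpha\cdot(\vec{T}\times\vec{N})=(Fa+Gb)(\alpha\cdot\phi_u)-(Ea+Fb)(\alpha\cdot\phi_v),
\]
together with the corresponding identity on $\bar{S}$. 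Since the isometry already yields $\bar{E}=E$, $\bar{F}=F$, $\bar{G}=G$, it then suffices to establish $\alpha\cdot\phi_u=\bar{\alpha}\cdot\bar{\phi}_u$ and $\alpha\cdot\phi_v=\bar{\alpha}\cdot\bar{\phi}_v$.

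Next I would dot equation $(\ref{o1})$ with $\phi_u$. The first-order term $\lambda_1(s)(\phi_uu'+\phi_vv')$ contributes $\lambda_1(s)(Eu'+Fv')$, while the second-order term produces the dot products $\phi_{uu}\cdot\phi_u$, $\phi_{uv}\cdot\phi_u$, $\phi_{vv}\cdot\phi_u$ along with $\phi_u\cdot\phi_u$ and $\phi_v\cdot\phi_u$. The decisive point is that the derivative dot products are intrinsic: differentiating $E=\phi_u\cdot\phi_u$, $F=\phi_u\cdot\phi_v$ and $G=\phi_v\cdot\phi_v$ gives
\[
\phi_{uu}\cdot\phi_u=\tfrac{1}{2}E_u,\qquad \phi_{uv}\cdot\phi_u=\tfrac{1}{2}E_v,\qquad \phi_{vv}\cdot\phi_u=F_v-\tfrac{1}{2}G_u,
\]
so that $\alpha\cdot\phi_u$ becomes a function of $E,F,G$, their first partial derivatives, and the curve data $u',v',u'',v'',\lambda_1(s),\lambda_2(s),\kappa(s)$ alone; crucially, no coefficient of the second fundamental form survives the projection. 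The companion expression for $\alpha\cdot\phi_v$ follows from $\phi_{uu}\cdot\phi_v=F_u-\tfrac{1}{2}E_v$, $\phi_{uv}\cdot\phi_v=\tfrac{1}{2}G_u$ and $\phi_{vv}\cdot\phi_v=\tfrac{1}{2}G_v$.

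Finally I would invoke the isometry. Because $f$ preserves the first fundamental form, the relations $E=\bar{E}$, $F=\bar{F}$, $G=\bar{G}$ hold as equalities of functions on the common $(u,v)$-domain; differentiating these identities shows that every first partial derivative is preserved as well, that is $E_u=\bar{E}_u$, $E_v=\bar{E}_v$, and likewise for $F$ and $G$. Substituting into the two expressions just obtained gives $\alpha\cdot\phi_u=\bar{\alpha}\cdot\bar{\phi}_u$ and $\alpha\cdot\phi_v=\bar{\alpha}\cdot\bar{\phi}_v$, and inserting these into the displayed formula for the $\vec{T}\times\vec{N}$ component yields the asserted equality. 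The step to handle with the most care---and the reason this component is genuinely invariant, in contrast with the normal component of Theorem 4.1---is precisely the verification that projecting $\alpha$ onto the tangential directions $\phi_u,\phi_v$ annihilates every occurrence of $L,M,N$ and leaves only intrinsic first-order data; once that reduction is in place, the preservation of the derivatives of $E,F,G$ under isometry makes the invariance immediate.
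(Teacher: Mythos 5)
Your proposal is correct and follows essentially the same route as the paper's own proof: both hinge on the identities $\phi_{uu}\cdot\phi_u=\tfrac{1}{2}E_u$, $\phi_{vv}\cdot\phi_u=F_v-\tfrac{1}{2}G_u$, etc., which show that the tangential projections of $\alpha$ involve only the first fundamental form and its first derivatives, all of which are preserved by the isometry. The only difference is organizational --- you first isolate the invariance of $\alpha\cdot\phi_u$ and $\alpha\cdot\phi_v$ before recombining, whereas the paper expands $\alpha\cdot(\vec{T}\times\vec{N})$ in a single computation --- but the underlying argument is the same.
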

 \begin{proof}
 Since $f$ is isometry, for the surface patch $\phi(u,v)$ of the surface $S$, $f\circ\phi(u,v)$ is also a surface patch for the surface $\bar{S}$. Let the coefficients of first fundamental form of $\phi$ and $f\circ\phi=\bar{\phi}$ be $\{E,F,G\}$ and $\{\bar{E},\bar{F},\bar{G}\}$ respectively. Then
 \begin{equation}\label{fff1}
 G=\bar{G},\ F=\bar{F} \text{ and } E=\bar{E},
 \end{equation}
 and 
 \begin{equation}\label{fff2}
 \bar{E}_u=E_u,\ \bar{E}_v=E_v,\ \bar{F}_u=F_u,\ \bar{F}_v=F_v,\ \bar{G}_u=G_u,\text{ and }\bar{G}_v=G_v.
 \end{equation}
 Now 
 \begin{equation}\label{fff3}
 \begin{cases}
 \phi_{uu}\cdot\phi_u=\frac{1}{2}E_u,\  \phi_{uv}\cdot\phi_u=\frac{1}{2}E_v,\  \phi_{uu}\cdot\phi_v=F_u-\frac{1}{2}E_v,\\
 \phi_{uv}\cdot\phi_v=\frac{1}{2}G_u,\  \phi_{vv}\cdot\phi_v=\frac{1}{2}G_v,\  \phi_{vv}\cdot\phi_u=F_v-\frac{1}{2}G_u.
 \end{cases}
 \end{equation}
 From $(\ref{o1})$ and $(\ref{fff3})$, we have
 \begin{eqnarray*}
 \alpha\cdot(\vec{T}\times\vec{N})&=& \alpha\cdot\{(a\phi_u+b\phi_v)\times\vec{ N}\},\\
 &=& \alpha\cdot(Fa\phi_u-Ea\phi_v+Gb\phi_u-Fb\phi_v),\\
 &=&\Big[\lambda_1(s)(\phi_uu'+\phi_vv')+\frac{\lambda_2(s)}{\kappa(s)}\Big\{(\phi_{uu}u'^2+2\phi_{uv}u'v'+\phi_{vv}v'^2)\\
 &&+(\phi_uu''+\phi_v)v''\Big\}\Big]\cdot(Fa\phi_u-Ea\phi_v+Gb\phi_u-Fb\phi_v),\\
 &=&\lambda_1(s)(av'-bu')(F^2-EG)+\frac{\lambda_2(s)}{\kappa(s)}\Big[u'^2a\Big\{\frac{1}{2}FE_u-E\Big(F_u-\frac{1}{2}E_v\Big)\Big\}\\
 &&+u'^2b\Big\{\frac{1}{2}GE_u-F\Big(F_u-\frac{1}{2}E_v\Big)\Big\}+2u'v'a\Big\{\frac{1}{2}FE_u-\frac{1}{2}EG_u\Big\}2u'v'b\Big\{\frac{1}{2}GE_u\\
 &&-\frac{1}{2}FG_u\Big\}+v'^2a\Big\{F\Big(F_v-\frac{1}{2}G_u\Big)-\frac{1}{2}EG_v\Big\}-v'^2b\Big\{G\Big(F_v-\frac{1}{2}G_u\Big)-\frac{1}{2}FG_v\Big\}\Big].
 \end{eqnarray*}
 By the virtue of $(\ref{fff1})$, $(\ref{fff2})$ and $(\ref{fff3})$, the last equation yields 
 \begin{eqnarray*}
 \alpha\cdot(\vec{T}\times\vec{N})&=&\lambda_1(s)(av'-bu')(\bar{F}^2-\bar{E}\bar{G})+\frac{\lambda_2(s)}{\kappa(s)}\Big[u'^2a\Big\{\frac{1}{2}\bar{F}\bar{E}_u-\bar{E}\Big(\bar{F}_u-\frac{1}{2}\bar{E}_v\Big)\Big\}\\
 &&+u'^2b\Big\{\frac{1}{2}\bar{G}\bar{E}_u-\bar{F}\Big(\bar{F}_u-\frac{1}{2}\bar{E}_v\Big)\Big\}+2u'v'a\Big\{\frac{1}{2}\bar{F}\bar{E}_u-\frac{1}{2}\bar{E}\bar{G}_u\Big\}2u'v'b\Big\{\frac{1}{2}\bar{G}\bar{E}_u\\
 &&-\frac{1}{2}\bar{F}\bar{G}_u\Big\}+v'^2a\Big\{\bar{F}\Big(\bar{F}_v-\frac{1}{2}\bar{G}_u\Big)-\frac{1}{2}\bar{E}\bar{G}_v\Big\}-v'^2b\Big\{\bar{G}\Big(\bar{F}_v-\frac{1}{2}\bar{G}_u\Big)-\frac{1}{2}\bar{F}\bar{G}_v\}\Big],\\
  &=&\bar{\alpha}\cdot(\vec{\bar{T}}\times\vec{\bar{N}}).
 \end{eqnarray*}
 This proves the result.
 \end{proof}

\section{acknowledgment}
 The second author greatly acknowledges to The University Grants Commission, Government of India for the award of Junior Research Fellow.

\end{document}